\algnewcommand\algorithmicto{\textbf{to}}
\algnewcommand\RETURN{\State \textbf{return} }
\newif\ifmargincomments 
\newif\ifdoubleblind
\begin{document}

\title{Scalable and Congestion-aware Routing for\\ Autonomous Mobility-on-Demand\\ via Frank-Wolfe Optimization}

\ifdoubleblind
\author{Author Names Omitted for Anonymous Review. Paper-ID 197}
\else
\author{
\authorblockN{Kiril Solovey, Mauro Salazar and Marco Pavone}
\\
\authorblockA{Autonomous Systems Lab, Stanford University, Stanford, CA 94305, USA \\ {\tt\small $\{$kirilsol,samauro,pavone$\}$@stanford.edu}}
}
\fi


\maketitle

\begin{abstract}
We consider the problem of vehicle routing for Autonomous Mobility-on-Demand (AMoD) systems, wherein a fleet of  self-driving vehicles provides on-demand mobility in a given environment. Specifically, the task it to compute routes for the vehicles (both customer-carrying and empty travelling) so that travel demand is fulfilled and operational cost is minimized. The routing process must account for congestion effects affecting travel times, as modeled via a  volume-delay function (VDF). Route planning with VDF constraints is notoriously challenging, as such constraints compound the combinatorial complexity of the routing optimization process. Thus, current solutions for AMoD routing resort to relaxations of the congestion constraints, thereby trading optimality with computational efficiency.  In this paper, we present the first computationally-efficient approach for AMoD routing where VDF constraints are explicitly accounted for. We demonstrate that our approach is faster by at least one order of magnitude with respect to the state of the art, while providing higher quality solutions. From a methodological standpoint, the key technical insight is to   establish a mathematical reduction of the AMoD routing problem to the classical traffic assignment problem (a related vehicle-routing problem where empty traveling vehicles are not present). Such a reduction allows us to extend powerful algorithmic tools for  traffic  assignment, which combine the  classic Frank-Wolfe algorithm with modern techniques for pathfinding, to the AMoD routing problem. We provide strong theoretical guarantees for our approach in terms of near-optimality of the returned solution. 
\end{abstract}

\newcommand{\cupdot}{\mathbin{\mathaccent\cdot\cup}}

\newcommand{\ignore}[1]{}

\newcommand{\cpp}{C\raise.08ex\hbox{\tt ++}\xspace}

\def\P{\mathcal{P}} \def\C{\mathcal{C}} \def\H{\mathcal{H}}
\def\F{\mathcal{F}} \def\U{\mathcal{U}} \def\L{\mathcal{L}}
\def\O{\mathcal{O}} \def\I{\mathcal{I}} \def\S{\mathcal{S}}
\def\G{\mathcal{G}} \def\Q{\mathcal{Q}} \def\I{\mathcal{I}}
\def\T{\mathcal{T}} \def\L{\mathcal{L}} \def\N{\mathcal{N}}
\def\V{\mathcal{V}} \def\B{\mathcal{B}} \def\D{\mathcal{D}}
\def\W{\mathcal{W}} \def\R{\mathcal{R}} \def\M{\mathcal{M}}
\def\X{\mathcal{X}} \def\A{\mathcal{A}} \def\Y{\mathcal{Y}}
\def\L{\mathcal{L}}

\def\dS{\mathbb{S}} \def\dT{\mathbb{T}} \def\dC{\mathbb{C}}
\def\dG{\mathbb{G}} \def\dD{\mathbb{D}} \def\dV{\mathbb{V}}
\def\dH{\mathbb{H}} \def\dN{\mathbb{N}} \def\dE{\mathbb{E}}
\def\dR{\mathbb{R}} \def\dM{\mathbb{M}} \def\dm{\mathbb{m}}
\def\dB{\mathbb{B}} \def\dI{\mathbb{I}} \def\dM{\mathbb{M}}
\def\dZ{\mathbb{Z}}

\def\E{\mathbf{E}} 

\def\eps{\varepsilon}

\def\limn{\lim_{n\rightarrow \infty}}

\def\indicator{\mathds{1}}
\def\eqq{\coloneqq}

\def\Reals{\mathbb{R}}
\def\Naturals{\mathbb{N}}
\renewcommand{\leq}{\leqslant}
\renewcommand{\geq}{\geqslant}
\newcommand{\compl}{\mathrm{Compl}}

\newcommand{\np}{{\sc np}\xspace}
\newcommand{\argmin}{\operatornamewithlimits{argmin}}

\newtheorem{lemma}{Lemma}
\newtheorem{theorem}{Theorem}
\newtheorem{corollary}{Corollary}
\newtheorem{claim}{Claim}
\newtheorem{proposition}{Proposition}
\newtheorem{assumption}{Assumption}

\theoremstyle{definition}
\newtheorem{definition}{Definition}
\newtheorem{remark}{Remark}
\theoremstyle{plain}
\newtheorem{observation}{Observation}

\def\Im{\textup{Im}}

\def\aas{a.a.s.\xspace}
\def\0{\bm{0}}

\makeatletter
\def\thmhead@plain#1#2#3{%
  \thmname{#1}\thmnumber{\@ifnotempty{#1}{ }\@upn{#2}}%
  \thmnote{ {\the\thm@notefont#3}}}
\let\thmhead\thmhead@plain
\makeatother

\def\todo#1{\textcolor{blue}{\textbf{TODO:} #1}}
\def\new#1{\textcolor{magenta}{#1}}
\def\kiril#1{\textcolor{ForestGreen}{\textbf{Kiril:} #1}}
\def\old#1{\textcolor{red}{#1}}
\def\removed#1{\textcolor{green}{#1}}

\def\dt{\,\mathrm{d}t}
\def\dx{\,\mathrm{d}x}
\def\dy{\,\mathrm{d}y}
\def\ds{\,\mathrm{d}s}
\def\drho{\,\mathrm{d}\rho}

\def\amod{\textsc{AMoD}\xspace}
\def\amodl{\textsc{AMoD+L}\xspace}
\def\tap{\textsc{TAP}\xspace}
\def\fw{\textsc{FrankWolfe}\xspace}

\def\bpr{\textsc{bpr}\xspace}

\def\frankwolfe{\textsc{FrankWolfe}\xspace}
\def\allornothing{\textsc{AllOrNothing}\xspace}
\def\allornothingloss{\textsc{AllOrNothing-loss}\xspace}
\def\shortestpath{\textsc{ShortestPath}\xspace}

\def\x{\bm{x}}
\def\y{\bm{y}}
\def\z{\bm{z}}
\def\k{\bm{\kappa}}

\def\barE{\bar{E}}


\newcommand{\blueb}[1]{\textcolor{blue}{#1}}
\newcommand{\redb}[1]{\textcolor{red}{#1}}

\definecolor{lightblue}{rgb}{0.60784,0.76078,0.90196}
\definecolor{darkblue}{rgb}{0.26667,0.44706,0.76863}
\definecolor{lightgreen}{rgb}{0.66275,0.81569,0.55686}
\definecolor{darkgreen}{rgb}{0.43922,0.67843,0.27843}
\definecolor{orange}{rgb}{0.92941,0.49020,0.19216}
\definecolor{yellow}{rgb}{1.00000,0.75294,0.00000}
\definecolor{grey}{rgb}{0.64706,0.64706,0.64706}
\definecolor{purple}{rgb}{0.51373,0.23529,0.04706}
\newacronym{abk:amod}{\textsc{AMoD}\xspace}{Autonomous Mobility-on-Demand}
\newacronym{abk:iamod}{I-AMoD}{intermodal \gls{abk:amod}}
\newacronym{abk:bpr}{BPR}{Bureau of Public Roads}
\newacronym{abk:ffcs}{FFCS}{free floating car sharing systems}
\newacronym{abk:ghg}{GHG}{greenhouse gas}
\newacronym{abk:ilp}{ILP}{Integer Linear Programming}
\newacronym{abk:mcfp}{MCFP}{multi-commodity flow problem}
\newacronym{abk:mod}{MoD}{Mobility-on-Demand}
\newacronym{abk:mpc}{MPC}{Model Predictive Control}
\newacronym{abk:rsmod}{RsMoD}{Ride-sharing Mobility-on-Demand}
\newacronym{abk:ramod}{RAMoD}{Ride-sharing Autonomous Mobility-on-Demand}
\newacronym{abk:spp}{SPP}{shortest path problem}
\newacronym{abk:kdspp}{k-dSPP}{k-disjoint \gls{abk:spp}}
\newacronym{abk:tap}{TAP}{Traffci Assignment Problem}
\newcommand{\eqMatrix}{\mathbf{A}}
\newcommand{\ineqMatrix}{\mathbf{D}}
\newcommand{\source}{S_{o}}
\newcommand{\sink}{S_{d}}
\newcommand{\resCost}{T^{\mathrm{c}}}
\newcommand{\resPred}{T^{\mathrm{p}}}
\newcommand{\vTime}{V_\mathrm{T}}
\newcommand{\vDistR}{V_\mathrm{D,R}}
\newcommand{\vDistS}{V_\mathrm{D,P}}
\newcommand{\vEnergy}{V_\mathrm{E}}
\newcommand{\vCO}{V_{\mathrm{CO}_2}}
\newcommand{\vQuadratic}{V_\mathrm{Q}}
\newcommand{\eqRHS}{\mathbf{b}}
\newcommand{\capacity}{c}
\newcommand{\capacityRoad}{c^{\mathrm{R}}}
\newcommand{\capacityPublic}{c^{\mathrm{P}}}
\newcommand{\cost}{J}
\newcommand{\costM}{J_\mathrm{M}}
\newcommand{\costE}{J_{\mathrm{CO}_2}}
\newcommand{\transpCoeff}{\mathbf{c}^\top}
\newcommand{\destination}{d}
\newcommand{\ineqRHS}{\mathbf{d}}
\newcommand{\energyR}{e_{\mathrm{R},ij}}
\newcommand{\sC}{s_{\mathrm{CO}_2}}
\newcommand{\energyS}{e_{\mathrm{S},ij}}
\newcommand{\energySperpaxmile}{\bar{e}_{\mathrm{P}}}
\newcommand{\flow}[2]{f_m\left(#1,#2\right)}
\newcommand{\flowzero}[2]{f_m\left(#1,#2\right)}
\newcommand{\flowReba}[2]{f_0\left(#1,#2\right)}
\newcommand{\origin}{o}
\newcommand{\route}{p}
\newcommand{\pSubway}{p_\mathrm{P}(i,j)}
\newcommand{\pRoad}{p_\mathrm{R}(i,j)}
\newcommand{\pOrigin}[1]{p_\mathrm{O}\left(#1\right)}
\newcommand{\pDest}[1]{p_\mathrm{D}\left(#1\right)}
\newcommand{\request}{r}
\newcommand{\weight}{r_\mathrm{E}}
\newcommand{\traveltime}{t}
\newcommand{\tRoad}{\tau_\mathrm{R}(i,j)}
\newcommand{\decVec}{\mathbf{x}}
\newcommand{\setOfArcs}{\mathcal{A}}
\newcommand{\setOfArcsRoad}{\mathcal{A}_{\mathrm{R}}}
\newcommand{\setOfArcsSubway}{\mathcal{A}_{\mathrm{P}}}
\newcommand{\setOfArcsPedestrian}{\mathcal{A}_{\mathrm{W}}}
\newcommand{\setOfArcsCommute}{\mathcal{A}_{\mathrm{C}}}
\newcommand{\GraphRoad}{\mathcal{G}_\mathrm{R}}
\newcommand{\GraphSubway}{\mathcal{G}_\mathrm{P}}
\newcommand{\GraphPedestrian}{\mathcal{G}_\mathrm{W}}
\newcommand{\setOfModes}{\mathcal{K}}
\newcommand{\setOfRequestsNumber}{\mathcal{M}}
\newcommand{\pathSet}{\mathcal{P}}
\newcommand{\setOfRequests}{\mathcal{R}}
\newcommand{\subSet}{\mathcal{S}}
\newcommand{\setOfVertices}{\mathcal{V}}
\newcommand{\setOfVerticesRoad}{\mathcal{V}_{\mathrm{R}}}
\newcommand{\setOfVerticesSubway}{\mathcal{V}_{\mathrm{P}}}
\newcommand{\setOfVerticesPedestrian}{\mathcal{V}_{\mathrm{W}}}
\newcommand{\setOfMultiArcs}{\mathscr{A}}
\newcommand{\multigraph}{\mathscr{G}}
\newcommand{\multigraphC}{\mathscr{G}'}
\newcommand{\setOfArcsIndex}{\mathscr{P}}
\newcommand{\decvecSet}{\mathcal{X}}
\newcommand{\setOfMultiarcIndex}{\mathscr{Z}}
\newcommand{\neighborOne}{\Gamma}
\newcommand{\requestrate}{\alpha}
\newcommand{\cutsetSucc}[1]{\delta^{\text{+}}\left(#1\right)}
\newcommand{\cutsetPred}[1]{\delta^{\text{-}}\left(#1\right)}
\newcommand{\iteration}{\iota}
\newcommand{\lgeq}{\mathbf{\lambda}}
\newcommand{\dualEq}{\lambda_{ij}}
\newcommand{\dualCust}{\lambda_\mathrm{C}}
\newcommand{\dualVeh}{\lambda_\mathrm{R}}
\newcommand{\dualCustTilde}{\tilde{\lambda}_\mathrm{C}}
\newcommand{\dualVehTilde}{\tilde{\lambda}_\mathrm{R}}
\newcommand{\dualCR}{\mu_\mathrm{cR}}
\newcommand{\lgineq}{\mathbf{\mu}}
\newcommand{\dualIneq}{\mu}
\newcommand{\dualcR}{\mu_\mathrm{cR}}
\newcommand{\dualcS}{\mu_\mathrm{cP}}
\newcommand{\arc}{(i,j)}
\newcommand{\multiarc}{(i,j,z)}
\newcommand{\bool}[1]{\mathds{1}_{#1}}

\section{Introduction}
Mobility in urban environments is becoming a major issue on the global scale~\cite{LevyBuonocoreEtAl2010}.
The main reasons are an increasing population with higher mobility demands and a slowly adapting infrastructure~\cite{CIA:2018}, resulting in serious congestion problems.
In addition, the usage of public transit is dropping, whilst mobility-on-demand operators such as Uber and Lyft are increasing their operation on urban roads, increasing further congestion~\cite{Berger2018,Molla2018,Siddiqui2018}.
For instance, the yearly cost of congestion in the US has doubled between 2007 and 2013~\cite{Schrank2007,TuttleCowles2014}, and in Manhattan cars are traveling about 15\% slower compared to five years ago~\cite{Hu2017}.

Space limitations and a largely fixed infrastructure make congestion an issue difficult to address in urban environments.
While existing public transportation systems need to be extended to ease congestion, it is important to adopt technological innovations improving the efficiency of urban transit.
The advent of cyber-physical technologies such as autonomous driving and wireless communications will enable the deployment of \gls{abk:amod} systems, i.e., fleets of self-driving cars providing on-demand mobility in a one-way vehicle-sharing fashion (see Fig.~\ref{fig:AMoD}). Specifically, such a system is designed to carry passengers from their origins to their destinations, potentially in an intermodal  fashion (i.e., utilizing several modes of transportation), and to assign empty vehicles to new requests.
The main advantage of \gls{abk:amod} systems is that they can be controlled by a \emph{central} operator simultaneously computing routes for customer-carrying vehicles and \emph{rebalancing} routes for empty vehicles, thus enabling a \emph{system-optimal} operation of this transportation system. This way, \gls{abk:amod} systems could replace current taxi and ride-hailing services and reduce the global cost of travel~\cite{SpieserTreleavenEtAl2014}.

Conversely to conventional navigation providers computing the fastest route by passively considering congestion in an exogenous manner, \gls{abk:amod} systems controlled by a central operator enable one to consider the \emph{endogenous} impact of the single vehicles' routes on road traffic and travel time, and can thus be operated in a congestion-aware fashion.

\begin{figure}[t]
	\centering\includegraphics[width=\columnwidth]{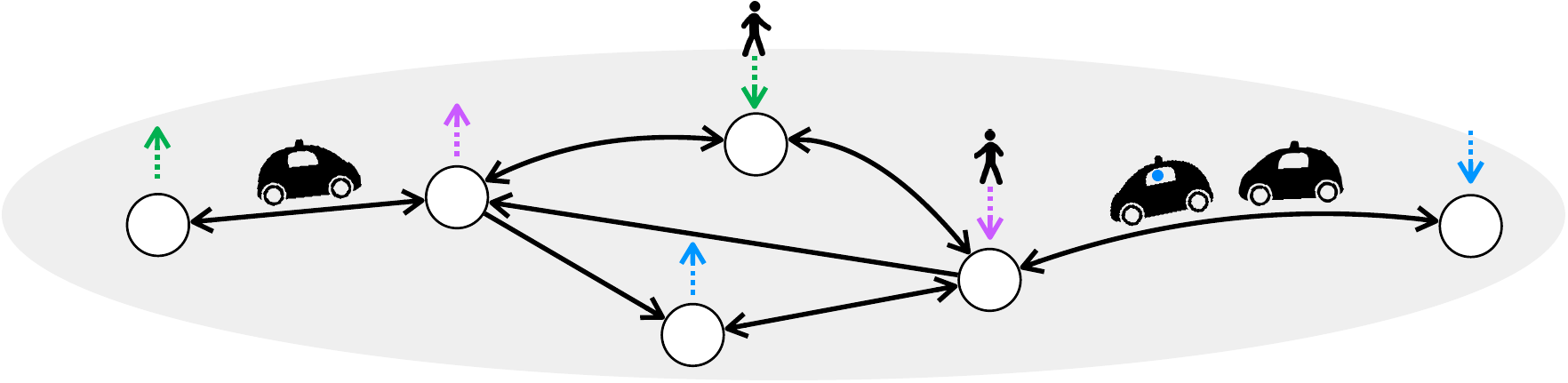}\vspace{-5pt}
	\caption{The AMoD network. The white circles represent intersections and the black arrows denote road links. The  dotted arrows represent pick-up and drop-off locations for single customers.}
	\label{fig:AMoD}\vspace{-15pt}
\end{figure}

{\em Statement of contributions:}
We introduce a computationally-efficient approach for congestion-aware \amod routing to minimize the system cost---the total cost of executing the routing scheme over all the vehicles in the system. To the best of our knowledge, this is the first method that  takes into consideration the full representation of the volume-delay function that estimates the travel time based on the amount of traffic.
Moreover, we demonstrate that our approach is faster by at least one order of magnitude than previous work (see Section~\ref{sec:related}) for congestion-aware \amod, while being more accurate in terms of congestion estimation.

On the algorithmic side, we develop a reduction which transforms the \amod routing problem into a Traffic Assignment Problem (\tap), where the latter does not involve rebalancing of empty vehicles. We then prove mathematically that an optimal solution for the latter \tap instance yields a solution to our original \amod problem with the following properties: (i) The majority (e.g., 99\% in our experiments) of rebalancing demands are fulfilled and (ii) the system cost of the solution is upper bounded by the system optimum where 100\% of the rebalancing demands are fulfilled. (We note that, in practice, the unfulfilled rebalancing demand, being just a small fraction -- say, $<1\%$ -- can be addressed via post-processing heuristic strategies with minimal impact on cost.)

Such a reformulation of the \amod problem allows us to leverage state-of-the-art techniques for \tap, that can efficiently compute a congestion-aware system optimum. In particular, we employ the classic Frank-Wolfe algorithm~\cite{FrankWolfe56, Patriksson15}, which is paired with modern shortest-path techniques, such as contraction hierarchies~\cite{GeisbergerETAL12} (both of which are implemented in recent open-source libraries~\cite{BuchETAL18,DibbeltETAL16}). This allows us to compute in a few seconds (on a commodity laptop) \amod routing schemes for a realistic test case over Manhattan, New York, consisting of $\num{156000}$ passenger travel requests.

\emph{Organization}: The remainder of this paper is structured as follows.
In Section~\ref{sec:related} we provide a review of related work.
In Section~\ref{sec:preliminaries} we formally define the instances of \tap and \amod we are concerned with in this work. There we also discuss the assumptions of our model and possible limitations. In Section~\ref{sec:convex} we provide a description of the Frank-Wolfe algorithm for \tap. Our main theoretical contribution is given in Section~\ref{sec:alg_amod}, where we describe our approach for \amod by casting it into \tap, and develop its mathematical properties. In Section~\ref{sec:experiments} we demonstrate the power of our approach and test its scalability on realistic inputs. We conclude the paper with a discussion and future work in Section~\ref{sec:future}.

\section{Related Work}\label{sec:related}


There exist several approaches to study \gls{abk:amod} systems, spanning from simulation models~\cite{HoerlRuchEtAl2018, LevinKockelmanEtAl2017,MaciejewskiBischoffEtAl2017} and queuing-theoretical models~\cite{ZhangPavone2016,IglesiasRossiEtAl2016} to network-flow models~\cite{PavoneSmithEtAl2012,RossiZhangEtAl2017,SpieserTreleavenEtAl2014}.
On the algorithmic side, the \textit{control} of \gls{abk:amod} systems has been mostly based on network flow models employed in a receding-horizon fashion~\cite{IglesiasRossiEtAl2018,TsaoIglesiasEtAl2018,TsaoMilojevicEtAl2019}, and thresholded approximations of congestion effects~\cite{RossiZhangEtAl2017}, also accounting for the interaction with public transit~\cite{SalazarRossiEtAl2018} and the power-grid~\cite{RossiIglesiasEtAl2018}. In such a framework, cars can travel through a road at free-flow speed until a fixed capacity of the road is reached. At that point, no more cars can traverse it.
Such models result in optimization problems solvable with off-the-shelf linear-programming solvers---making them very well suitable for \emph{control} purposes---but lacking accuracy when accounting for congestion phenomena, which are usually described with \emph{volume-delay functions} providing a relationship between traffic flow and travel time. In particular, the \gls{abk:bpr} developed the most commonly used volume-delay function~\cite{BPR1964}, which has been applied to problems ranging from dynamic estimation of congestion~\cite{RivasInmaculadaEtAl2016} to route planning in agent-based models~\cite{AslamLimEtAl2012, ManleyChengEtAl2014}.
Against this backdrop, a piecewise-affine approximation of the \gls{abk:bpr} function is presented in~\cite{SalazarTsaoEtAl2019} and combined with convex relaxation techniques to devise a congestion-aware routing scheme for \gls{abk:amod} systems resulting in a quadratic program. Nevertheless, in large urban environments with several thousand transportation requests such approaches usually lead to computational times of the order of minutes, possibly rendering them less suitable for real-time control purposes.
 
Mathematically, \amod can be viewed as an extension of \tap, where the latter ignores the cost and impact of rebalancing empty vehicles. Historically, \tap was introduced to model and quantify the impact of independent route choices of human drivers on themselves, and the system as a whole (see ~\cite{Patriksson15, Sheffi85}). Algorithmic approaches for \tap typically assume a \emph{static} setting in which travel patterns do not change with time, allowing to cast the problem into a multi-commodity minimum-cost flow~\cite{Ahuja93}, which can then be formulated as a \emph{convex programming} problem. One of the most popular tools for convex programming in the context of \tap is the Frank-Wolfe algorithm~\cite{FrankWolfe56}. What makes this algorithm particularly suitable for solving \tap is that its direction-finding step corresponds to multiple shortest-path
queries (we expand on this point in Section~\ref{sec:convex}). Recent advances in pathfinding tailored for transportation networks~\cite{BastETAL2016}, including contraction hierarchies~\cite{DibbeltETAL16, GeisbergerETAL12}, have made the Frank-Wolfe approach remarkably powerful. In particular, a recent work~\cite{BuchETAL18} introduced a number of improvements for pathfinding, and combines those with the Frank-Wolfe method. Notably, the authors present experimental results for \tap, where their approach computes within seconds a routing scheme for up to~3 million requests over a network of a large metropolitan area. Nevertheless, such an approach is not directly applicable to \amod problems as it would not account for the rebalancing of empty vechicles.

Finally, we mention that \amod is closely related to \emph{Multi-Robot Motion Planning} (MRMP), which consists of computing collision-free paths for a group of physical robots, moving them from the initial set of positions to their destination. MRMP has been studied for the setting of discrete~\cite{SharonETAL15,Yu18,YuLaValle16} and continuous~\cite{CapETAL15,ShomeETAL2019,SoloveyETAL15} domains, respectively. The unlabeled variant of MRMP~\cite{AdlerETAL15,SolHal16,TurpinETAL14b,TurpinETAL14a,YuLaValle12}, which involves also target assignment, is reminiscent of the rebalancing empty vehicles in \amod, as such vehicles do not have a priori assigned destinations.
\section{Preliminaries}\label{sec:preliminaries}
In this section we provide a formal definition of  \tap and \amod, as our work will exploit the tight mathematical relation between these two problems.

The basic ingredient in our study is the road network, which is modeled as a directed graph $G=(V,E)$: Each vertex $v\in V$  represents either a physical road intersection or points of interest on the road. Each edge $(i,j)\in E$ represents a road segment connecting two vertices $i,j\in V$.
To model the travel times along the network, every edge $(i,j)\in E$ is associated with a cost function $c_{ij}:\dR_+\rightarrow \dR_+$, which represents the travel time along the edge as a function of flow (i.e., traffic) along the edge, denoted by $x_{ij}\geq 0$. In order to accurately capture the cost, every edge $(i,j)$ has two additional attributes: the capacity of the edge $\kappa_{ij}>0$, which can be viewed as the amount of flow beyond which the travel time will increase rapidly, and the free-flow travel time $\phi_{ij}>0$, i.e., the \emph{nominal} travel time when $x_{ij}= 0$. We mention that those attributes are standard when modeling traffic (see, e.g.,~\cite{Patriksson15}).
The time-invariant nature of this model captures the \emph{average} value of the flows for a certain time period.

To compute $c_{ij}$ we use the \bpr function~\cite{BPR1964}, which is often employed in practice. It is noteworthy that our approach presented below can be modified to work with other functions such as the modified Davidson cost~\cite{Akcelik78}. Specifically, we define the cost function as
\begin{align*}
    c_{ij}(x_{ij})=\bpr(x_{ij},\kappa_{ij},\phi_{ij}) \eqq \phi_{ij}\cdot \left(1+\alpha\cdot \left(\frac{x_{ij}}{\kappa_{ij}}\right)^{\beta}\right),
\end{align*}
where typically $\alpha=0.15$ and $\beta=4$.

Travel demand is represented by  passenger requests  $OD=\{(\lambda_m,o_m,d_m)\}_{m=1}^M$, 
where \mbox{$\lambda_m>0$} represents the amount of customers willing to travel from the origin node $o_m\in V$ to the destination node $d_m\in V$ per time unit.

\subsection{Traffic Assignment}
Here we provide a mathematical formulation of traffic assignment. We denote by $x_{ijm}\in \dR_+$ the flow induced by request $m\in M$ on edge $(i,j)\in E$. We introduce the following constraint which ensures that the amount of flow associated with each request is maintained when a flow enters and leaves a given vertex. The amount of flow corresponding to the request $m\in M$, leaving $o_m$ and entering $d_m$ must match the demand flow $\lambda_m$ as
\begin{align}
    \sum_{j\in V_i^+}x_{ijm}-\sum_{j\in V_i^-}x_{jim}&=\lambda_{im},\quad \forall i\in V, m\in M,  \label{eq:tap:flow}
\end{align}
\[\textup{where }\lambda_{im}\eqq\begin{cases} \lambda_m,\quad & \text{if }o_m=i,\\ -\lambda_m,\quad & \text{if }d_m=i,\\0,\quad & \text{otherwise},\end{cases}\]
and $V_i^-\eqq\left\{j\middle|(i,j)\in E\right\},V_i^+\coloneqq \left\{j\middle|(j,i)\in E\right\}$, denote heads and tails of edges leaving and entering $i\in V$, respectively. We also impose non-negative flows as
\begin{align}
    x_{ijm}& \geq 0, \quad\forall (i,j)\in E. \label{eq:tap:positive}
\end{align}

The objective of \tap is specified in the following definition. Informally, the goal is to minimize the total travel time experienced by the users in the system, that is the sum of travel times for each individual request $m$.
\begin{definition}\label{def:tap}
    The \emph{traffic-assignment problem} (\tap) consists of minimizing the expression 
\begin{equation}
    F_E(\x)=\sum_{(i,j)\in E}x_{ij} c_{ij}(x_{ij}),\quad \textup{subject to~(\ref{eq:tap:flow}), (\ref{eq:tap:positive})}, 
\end{equation}
where $\x\eqq \left\{x_{ij}=\sum_{m\in M}x_{ijm}|(i,j)\in E\right\}$.
\end{definition}

\subsection{Autonomous Mobility-on-Demand}
In an \gls{abk:amod} system, the formulation of \tap captures only partially the cost of operating the full system. In particular, vehicles need to perform two types of tasks: (i) \emph{occupied} vehicles drive passengers from their origins to their destinations; (ii) after dropping passengers off at their destination,  \emph{empty} vehicles need to drive to the next origin nodes, where passengers will be picked up. Indeed, the formulation of \tap above only captures the cost associated with (i), but not (ii). Another crucial difference between \tap and \amod, which makes the latter significantly more challenging, is the fact that the travel destinations of empty vehicles are not given a priori and should be computed by the algorithm.
Thus, we extend the model to include also rebalancing empty vehicles and define $x_{ijr}$ as the rebalancing flow of empty vehicles over $(i,j)\in E$.
We force empty vehicles to be rebalanced from destination nodes to origin nodes as
\begin{align}
    \sum_{j\in V_i^+}x_{ijr}-\sum_{j\in V_i^-}x_{jir}&=r_i,\quad \forall i\in V,\label{eq:amod:flow}
\end{align}
\[\textup{for }r_i\eqq \sum_{m\in M}\left(\indicator\{d_m=i\}-\indicator\{o_m=i\}\right)\lambda_m, \] where  $\indicator\{\cdot\}$ is a boolean indicator function. Observe that nodes with more arriving than departing passengers do not require rebalancing. We use $R\eqq \sum_{i\in V}\indicator\{r_i>0\}r_i$ to denote the total number of rebalancing requests and enforce non-negativity of rebalancing flows as
\begin{align}
    x_{ijr} &\geq 0,\quad \forall(i,j)\in E.\label{eq:amod:positive}
\end{align}
    
\begin{definition}
The \emph{autonomous-mobility-on-demand problem} (\amod) consists of minimizing the expression $F_E(\hat{\x})$, subject to  (\ref{eq:tap:flow}), (\ref{eq:tap:positive}), (\ref{eq:amod:flow}), (\ref{eq:amod:positive}), where 
$\hat{\bm{x}}\eqq \left\{\hat{x}_{ij}\eqq x_{ij}+x_{ijr}\right\}_{(i,j)\in E}$. 
\end{definition}

\subsection{Discussion}
A few comments are in order.
First, we make the assumption that mobility requests do not change in time. This assumption is justified in cities where transportation requests change slowly with respect to the average travel time~\cite{Neuburger1971}.
Second, the model describes vehicle routes as fractional flows and it does not account for the stochastic nature of the trip requests and exogenous traffic. Given the mesoscopic perspective of our study, such an approximation is in order.
Moreover, given the computational effectiveness of the approach, our algorithm is readily implementable in real-time in a receding horizon fashion, whereby randomized sampling algorithms can be adopted to compute integer-valued solutions with near-optimality guarantees~\cite{Rossi2018}.
Third, we assume exogenous traffic to follow habitual routes and neglect the impact of our decisions on the traffic base load, leaving the inclusion of reactive flow patterns to future work.
Fourth, we model the impact of road traffic on travel time with the \gls{abk:bpr} function~\cite{BPR1964}, which is well established and, despite it does not account for microscopic traffic phenomena such as traffic lights, serves the purpose of route-planning on the mesoscopic level.
Finally, we constrain the capacity of the vehicles to one single customer, which is in line with current trends, and leave the extension to ride-sharing to future research~\cite{CapAlonso18,TsaoMilojevicEtAl2019}.
\section{Convex Optimization for \tap}\label{sec:convex}
In this section we describe the Frank-Wolfe method for convex optimization, which will later be used for solving \amod. First, we have the following statement concerning the convexity of \tap.

\begin{claim}[(Convexity)]
    \tap (Definition~\ref{def:tap}) is a convex problem. \label{claim:convex}
\end{claim}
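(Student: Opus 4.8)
The plan is to verify the two ingredients that together certify a convex optimization problem: that the feasible set is convex and that the objective $F_E$ is convex over that set. First I would observe that the constraints defining feasibility are all affine in the decision variables $\{x_{ijm}\}$: the flow-conservation equalities~(\ref{eq:tap:flow}) are linear equalities (their right-hand sides $\lambda_{im}$ are constants), and the non-negativity conditions~(\ref{eq:tap:positive}) are linear inequalities. Hence the feasible region is an intersection of hyperplanes and half-spaces, i.e.\ a polyhedron, which is convex. This step is immediate and requires no real work.

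The substantive part is the convexity of the objective. I would start by expanding a single edge term using the explicit \bpr form:
\begin{equation*}
    x_{ij}\,c_{ij}(x_{ij}) = \phi_{ij}\,x_{ij} + \phi_{ij}\,\alpha\,\kappa_{ij}^{-\beta}\,x_{ij}^{\,\beta+1}.
\end{equation*}
The first summand is linear, hence convex. For the second, I would note that on the relevant domain $x_{ij}\in\dR_+$ the map $x\mapsto x^{\beta+1}$ has second derivative $(\beta+1)\beta\,x^{\beta-1}\geq 0$ (for $\beta=4$ this is $20\,x^{3}\geq 0$), so it is convex on $\dR_+$. Since $\phi_{ij}>0$, $\alpha>0$, and $\kappa_{ij}>0$, the coefficient $\phi_{ij}\alpha\kappa_{ij}^{-\beta}$ is positive, so each edge term is a non-negatively weighted sum of convex functions and is therefore convex. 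Summing over $(i,j)\in E$, the map $\x\mapsto F_E(\x)$ is convex as a function of the aggregated edge flows $\x=\{x_{ij}\}$.

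The one subtlety I would be careful about is that the true decision variables are the per-request flows $\{x_{ijm}\}$, whereas $F_E$ is written in terms of the aggregates $x_{ij}=\sum_{m\in M}x_{ijm}$. To close this gap I would invoke the standard fact that the composition of a convex function with an affine map is convex: the aggregation $\{x_{ijm}\}\mapsto\{x_{ij}\}$ is linear, so $F_E$ viewed as a function of the full variable vector $\{x_{ijm}\}$ inherits convexity from its convexity in $\x$. Combining a convex objective with the convex feasible polyhedron yields a convex program, establishing the claim. I do not anticipate a genuine obstacle here; the only point demanding care is the domain restriction to $\dR_+$ (the power term $x^{\beta+1}$ is convex on the non-negative reals, which is exactly where constraint~(\ref{eq:tap:positive}) confines the flows) together with the affine-composition argument needed to move between per-request and aggregated flows.
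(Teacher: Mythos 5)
Your proof is correct and follows essentially the same route as the paper's: establish convexity of each edge term $x_{ij}c_{ij}(x_{ij})$ by a derivative argument (the paper notes the first derivative is strictly increasing; you equivalently check the second derivative of the expanded \bpr form is non-negative on $\dR_+$) and then conclude by summing convex functions. The additional points you make --- convexity of the polyhedral feasible set and the affine-composition step from per-request flows $\{x_{ijm}\}$ to aggregate flows $\x$ --- are standard facts the paper leaves implicit, and including them makes your write-up strictly more complete than the paper's proof.
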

\begin{proof}
    Given a specific edge $(i,j)\in E$, observe that the derivative of the expression $x_{ij}c_{ij}(x_{ij})$ is strictly increasing, which implies that it is convex. As the expression $F_E(\x)$ consists of a sum of convex functions, it is convex as well. 
\end{proof}

\subsection{The Frank-Wolfe Algorithm}
Due to the convexity of the problems introduced, we can leverage convex optimization to solve \tap, and consequently \amod, as we will see later on. Specifically, we use the Frank-Wolfe algorithm (\fw), which is well suited to our setting and has achieved impressive practical results for large-scale instances of \tap in a recent work~\cite{BuchETAL18}. 

Before introducing \fw, it should be noted that  
it is typically employed to minimize the \emph{user-equilibrium} cost function captured by 
\begin{align}
    \bar{F}_E(\x)=\sum_{(i,j)\in E}\int_0^{x_{ij}}\bar{c}_{ij}(s)\ds,\label{eq:ue}
\end{align} 
for some $\bar{c}_{ij}:\dR_+\rightarrow \dR_+$, whereas we are interested in computing the \emph{system optimum} corresponding to the minimum of $F_E(\x)=\sum_{(i,j)\in E}x_{ij}c_{ij}(x_{ij})$, using $c_{ij}$ as defined in the previous section. However, we can enforce the user-equilibrium reached by selfish agents to correspond to the system optimum by using the \emph{marginal costs} \[\bar{c}_{ij}(x_{ij})=\frac{\mathrm{d}}{\mathrm{d}x_{ij}}\left(x_{ij}c_{ij}(x_{ij})\right)=c_{ij}(x_{ij})+x_{ij}c'_{ij}(x_{ij}),\]
which quantifies the sensitivity of the total cost with respect of small changes in flows.
Specifically, to compute the system optimum, we only need to apply \fw to minimize $\bar{F}_E(\x)$ (Equation~\ref{eq:ue}). (See more information on this transformation in~\cite{Patriksson15}.) 
The algorithm below will be presented with respect to $\bar{F}_E$.

The following pseudo-code (Algorithm~\ref{alg:fw}) presents a simplified version of \fw, which is based on~\cite[Chapter 4.1]{Patriksson15}. The algorithm begins with an initial solution $\x^0$, which satisfies~(\ref{eq:tap:flow}), (\ref{eq:tap:positive}). To obtain $\x_0$, one can, for instance, assign each request $(\lambda_m,o_m,d_m)$ to the shortest route over the traffic-free graph $G$, while ignoring the flows of the other users.  

\begin{algorithm}
\caption{\frankwolfe$(\bar{F}_E,G,OD)$}
	\begin{algorithmic}[1]
		\State{$\x^0\gets \text{feasible solution for \tap}$; $k\gets 0$}
		\While {stopping criterion not reached}
		\State{$\y^k\gets \argmin_{\y} \bar{F}_E(\x^k)+\nabla \bar{F}_E(\x^k)^T(\y-\x^k)$,  s.t. $\y^k$ satisfies (\ref{eq:tap:flow}), (\ref{eq:tap:positive})}
		\State{$\alpha_k\gets \argmin_{\alpha\in [0,1]}\bar{F}_E(\x^k+\alpha(\y^k-\x^k))$}
		\State{$\x^{k+1}\gets \x^k+\alpha_k(\y^k-\x^k)$; $k\gets k+1$}
		\EndWhile	
		\State 	\Return $\x^{k}$ 
	\end{algorithmic} \label{alg:fw}
\end{algorithm}

In each iteration $k$ of the algorithm, the following steps are performed. In line~3 a value of $\y^k$ minimizing the expression $\bar{F}_E(\x^k)+\nabla \bar{F}_E(\x^k)^T(\y^k-\x^k)$, which satisfies~(\ref{eq:tap:flow}), (\ref{eq:tap:positive}), is obtained. It should be noted that this corresponds to solving a linear program with respect to $\y^k$, due to the fact that $\x^k$ is already known, and one is working with the gradient of $\bar{F}_E$ rather than the function itself. We will say a few more words about this computation below. In line~4 a scalar $\alpha_k\in [0,1]$ is found, such that $\bar{F}_E\left(\x^k+\alpha_k(\y^k-\x^k)\right)$ is minimized, which corresponds to solving a single-variable optimization problem, which can be done efficiently. At the end of the iteration in line~5 the solution is updated to be a linear interpolation between $\x^k$ and $\y^k-\x^k$. The last value of $\x^k$ computed before the stopping criteria has been reached, is returned in the end. Due to the convexity of the problem, it is guaranteed that as \mbox{$k\rightarrow \infty$}, $\x^k$ converges to the optimal solution of \tap.

\subsection{All-or-nothing Assignment}
What makes \fw particularly suitable for solving \tap is the special structure of the task of computing $\y^k$ which minimizes $\bar{F}_E(\x^k)+\nabla \bar{F}_E(\x^k)^T(\y^k-\x^k)$. First, observe that it is equivalent to minimizing the expression $\nabla\bar{F}_E(\x^k)^T\y^k$. Next, notice that for any $(i,j)\in E$ it holds that $\frac{\partial}{\partial x_{ij}}\bar{F}_E(\x^k)=\bar{c}_{ij}(x^k_{ij})$, where $x^k_{ij}$ is the value corresponding to $(i,j)$ of $\x^k$. That is, every variable $y^k_{ij}$ is multiplied by $\bar{c}_{ij}(x^k_{ij})$. Thus, minimizing the expression $\nabla\bar{F}_E(\x^k)^T\y^k$ while satisfying~\eqref{eq:tap:flow}, \eqref{eq:tap:positive} is equivalent to independently assigning the shortest route for every request $(\lambda_m,o_m,d_m)$, over the graph $G$, where the cost of traversing the edge $(i,j)$ is independent of the traffic passing through it, and is equal to $(\nabla\bar{F}_E(\x^k))_{ij}$. 

This operation is known as All-or-Nothing assignment, due to the fact that each request is assigned to one specific route. Its pseudo code is given below (Algorithm~\ref{alg:aon}). The \shortestpath routine returns a vector $\y^k_m$, where for every $(i,j)\in E$ that is found on the shortest path from $o_m$ to $d_m$ on $G$, weighted by $\nabla\bar{F}_E(\x^k)$, $\y^k_{m,ij}=1$, and $\y^k_{m,ij}=0$ otherwise.  

\begin{algorithm}
\caption{\allornothing$(G,\nabla\bar{F}_E(\x^k),OD)$}
	\begin{algorithmic}[1]
		\For {$m\in M$}
		\State{$\y^k_m\gets \shortestpath(G,\nabla\bar{F}_E(\x^k),o_m,d_m)$}
		\EndFor	
		\State 	\Return $\y^{k}\eqq \sum_{m\in M}\lambda_m y^k_m$
	\end{algorithmic}\label{alg:aon}
\end{algorithm}
\section{AMoD as TAP}\label{sec:alg_amod}
In this section we establish an equivalence between \tap and \amod. In particular, we show that a given \amod problem can be transformed into a \tap, such that a solution to the latter, which is obtained by \frankwolfe, yields a solution to the former. 

The crucial difference between the two problems is that in \tap every vehicle has a specific origin and destination vertex, whereas in \amod this is not the case. In particular, while in \amod empty rebalancers originate in specific destination vertices of user requests, the destinations of these rebalancers can in theory be any of the origin vertices. However, we show that this gap can be bridged by supplementing the original graph $G$ with an additional ``dummy'' vertex, and connecting to it edges emanating from all the vertices that need to be rebalanced. We then set the costs of the edges to guarantee an almost complete rebalancing, i.e., only a small fraction of the rebalancing requests will not be fulfilled. In the remainder of this section we provide a detailed description of the approach and proceed to analyze its theoretical guarantees. 

\subsection{The construction}
We formally describe the structure of this new graph $G'=(V',E')$, where $V'=V\cup \{n\}$, and 
\[E'=E\cup \left\{(i,n)\middle| i\in V\textup{ and }r_i<0\right\}.\]
Recall that $r_i<0$ indicates that there are fewer user requests arriving to $i\in V$ than there are departing from the vertex, which implies that rebalancers should be sent to this vertex. The vertex $n\not\in V$ is new and will serve as dummy target vertex for all the rebalancers. See example in Figure~\ref{fig:construction}.

To ensure that a sufficient number of rebalancers will arrive at each vertex that needs to be rebalanced, we assign to every edge $(i,n)$ 
the cost $c_{in}(x_{in})=\bpr(x_{in},\kappa_{in},\phi_{in})$, where $\kappa_{in}=-r_i$, and $\phi_{in}=L$, 
where $L$ is a large constant whose value will be determined later on.

The final ingredient in transforming \amod to \tap is providing excess vehicles with  specific origins and destinations. Given the original set of requests $OD$, for every $i\in V$ such that $r_i>0$ we add the request $(r_i,o_i,n)$, where $r_i$ is its intensity. This yields the  extended requests set~$OD'$.

\begin{figure}
    \centering
    \includegraphics[width=0.5\columnwidth]{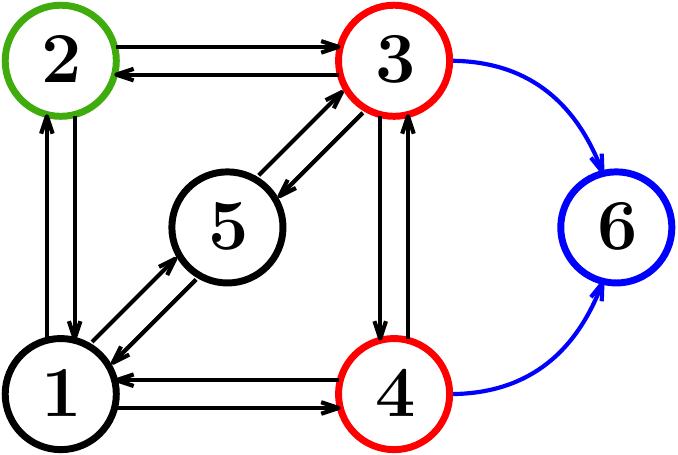}\vspace{-5pt}
    \caption{A simple example for the construction. The graph~$G$ consists of the vertices $1$ to $5$ and the (black) edges between them. The demand for~$G$ is $OD=\{(2,1,2),(1,2,4),(1,3,4),(2,4,1),(2,4,2)\}$, where each triplet denotes the intensity, origin, and destination, respectively. Red vertices indicate shortage of incoming vehicles (e.g., four passengers depart from vertex $4$, but only two vehicles arrive), green indicates excess (e.g, four vehicles terminate in vertex $2$ but only two passengers leave), black represents vertices with  met demand (e.g., vertex $1$ where two vehicles terminating, and two passengers departing). 
    Consequently, the graph~$G'$ additionally contains the dummy vertex $6$, and edges $(3,6),(4,6)$ drawn in blue, originating from vertices of $G$ with shortage. Accordingly, the capacity is set to $\kappa_{3,6}=1,\kappa_{4,6}=2$. $OD'$ extends $OD$ with the request $(3,2,6)$. Observe that its intensity corresponds to the total excess in vertices $3,4$.}
	\label{fig:construction}
\vspace{-15pt}
\end{figure}

As each free or occupied vehicle has a specific destination, we can think of the \amod problem as a new \tap problem over the graph $G'$ and the extended set of requests $OD'$. As rebalancers are no longer needed to be considered separately from the users, we may redefine $x_{ij}$ to be the total flow along an edge $(i,j)\in E'$, including users and rebalancers. Denote $\bar{E}\eqq E'\setminus E$. The cost function for the corresponding \tap is $F_{E'}(\x)=F_{E}(\x)+F_{\bar{E}}(\x)$, where $F_{\bar{E}}(\x)=\sum_{(i,n)\in \bar{E}}x_{in}c_{in}(x_{in})$.

We show in the remainder of this section that after choosing~$L$ correctly, then $\x^*$, which minimizes the system optimum $F_{E'}(\x^*)$ under the constraints (\ref{eq:tap:flow}), (\ref{eq:tap:positive}), with respect to $G',OD'$, represents a high-quality solution to the \amod problem, in which the majority of rebalancing requests are fulfilled. 
It is worth clarifying that the cost of the obtained flow is represented by $F_{E}(\x)$.

\subsection{Analysis}\label{sec:theory}
First, we note that the new objective function $F_{E'}$ remains convex owing to the fact that for every new dummy edge its cost function is monotone and increasing with respect to its flow (see Claim~\ref{claim:convex}).
Given a vector assignment $\x$ for \tap over $G'$, it will be useful to split it into variables $\x_E$ corresponding to the edges $E$, and variables $\x_{\bar{E}}$ corresponding to $\bar{E}$.

The motivation for setting the specific capacity value $\kappa_{in}$ to edge $(i,n)\in \bar{E}$  is given in the following lemma. Recall that $R\eqq \sum_{i\in V}\indicator\{r_i>0\}r_i$. 
\begin{lemma}[(Optimal assignment for dummy edges)]\label{lem:opt}
    Let $\x^*$ minimize $F_{\barE}(\x^*)$, under the constraint that $\sum_{(i,n)\in \bar{E}}x^*_{in}=R$. Then $\x^*_{\barE}=\k$, where $\k=\left\{\kappa_{in}\middle| (i,n)\in \bar{E}\right\}$.
\end{lemma}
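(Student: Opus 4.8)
The plan is to exploit the convexity of $F_{\barE}$---each term $x_{in}c_{in}(x_{in})$ is convex by Claim~\ref{claim:convex}, so the sum is convex---and to characterize the minimizer through the first-order (KKT) optimality conditions of the equality-constrained problem. Since the objective is convex and the constraint $\sum_{(i,n)\in\barE}x_{in}=R$ (together with the non-negativity $x_{in}\geq 0$ that is built into the BPR domain) is linear, any feasible point meeting the stationarity conditions is automatically a global minimizer. Hence it suffices to exhibit a single Lagrange multiplier certifying that $\x^*_{\barE}=\k$ is stationary.

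First I would compute the marginal cost of each dummy edge. Writing $g_{in}(x)\eqq x\cdot c_{in}(x)=Lx+L\alpha\, x^{\beta+1}/\kappa_{in}^\beta$, its derivative is
\[g'_{in}(x)=L\left(1+\alpha(\beta+1)\left(\frac{x}{\kappa_{in}}\right)^\beta\right).\]
The crucial observation is that evaluating this at $x=\kappa_{in}$ yields $g'_{in}(\kappa_{in})=L(1+\alpha(\beta+1))$, a value \emph{independent of the capacity} $\kappa_{in}$. Thus setting $x^*_{in}=\kappa_{in}$ on every dummy edge equalizes all marginal costs across $\barE$ to the common value $\mu\eqq L(1+\alpha(\beta+1))$. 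Because every $\kappa_{in}>0$, all these flows are strictly positive, so the non-negativity multipliers vanish and the KKT stationarity condition $g'_{in}(x^*_{in})=\mu$ holds on each edge with this single multiplier $\mu$. This is the heart of the argument: filling each edge to exactly its own capacity is precisely what balances the marginal costs.

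It then remains to verify feasibility, i.e.\ that $\x^*_{\barE}=\k$ satisfies $\sum_{(i,n)\in\barE}\kappa_{in}=R$. By definition $\kappa_{in}=-r_i$ for each shortage vertex, so the left-hand side equals the total shortage $\sum_{i:\,r_i<0}(-r_i)$. Summing $r_i=\sum_{m\in M}(\indicator\{d_m=i\}-\indicator\{o_m=i\})\lambda_m$ over all $i\in V$ and interchanging the order of summation, each request $m$ contributes $+\lambda_m$ at its destination and $-\lambda_m$ at its origin, so $\sum_{i\in V}r_i=0$. Therefore total shortage equals total excess, $\sum_{i:\,r_i<0}(-r_i)=\sum_{i:\,r_i>0}r_i=R$, giving exactly the budget constraint. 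Finally, uniqueness (the lemma asserts the minimizer \emph{is} $\k$) follows since $g_{in}$ is strictly convex on $(0,\infty)$ for $\beta>1$, so $F_{\barE}$ is strictly convex on the positive orthant and its stationary point is the unique global minimizer.

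I expect the main obstacle to be conceptual rather than computational: recognizing the scale-free cancellation of the BPR marginal cost at $x=\kappa$. The optimization machinery is routine once one sees that equalized marginal costs characterize the convex optimum, but the entire lemma hinges on the coincidence that this balanced point $\x_{\barE}=\k$ simultaneously satisfies the budget $\sum\kappa_{in}=R$---which is in turn guaranteed by the flow-conservation identity $\sum_{i}r_i=0$. Getting both facts to line up is the only real subtlety.
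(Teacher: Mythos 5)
Your proof is correct and takes essentially the same route as the paper's: both apply Lagrange-multiplier (KKT) conditions to the convex, linearly-constrained problem and exploit the fact that the BPR marginal cost at capacity, $g'_{in}(\kappa_{in})=L\left(1+\alpha(\beta+1)\right)$, is independent of $\kappa_{in}$ --- the paper solves the stationarity equations for $x_j$ in terms of $\lambda$ and then pins down $\lambda$ via the budget constraint, while you verify the candidate $\x_{\barE}=\k$ directly against the KKT system. As minor bonuses, you derive the feasibility identity $\sum_{(i,n)\in\barE}\kappa_{in}=R$ from flow conservation (the paper simply asserts it) and you keep $\alpha,\beta$ general, whereas the paper's algebra implicitly sets $\alpha=1$ (yielding $\lambda=6L$) even though it uses $\alpha=0.15$ elsewhere.
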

\begin{proof}
    Note that $F_{\barE}(\x_{\bar{E}})$ is convex, and feasible for the constraint $\sum_{x_{in}\in\x_{\bar{E}}} x_{in}=R$. Thus, it has a unique minimum. To find it, we shall use Lagrange multipliers. 
    
    Let $g(\x)\eqq \sum_{j=1}^m x_j$and define the Lagrangian \mbox{$\L(\x,\lambda)\eqq F_{\bar{E}}(\x)-\lambda(g(\x)-R)$}.
    For any $x_j\in \x$,
    \begin{align*}
        \frac{\partial}{\partial x_j} \L = L\left(1+5\left(\frac{x_j}{\kappa_j}\right)^4\right)-\lambda, \text{ and } \frac{\partial}{\partial \lambda} \L =R-g(\x).
    \end{align*}The constraint $\frac{\partial}{\partial x_j} \L=0$ yields that \mbox{$x_j = \kappa_j\left(\frac{\lambda}{5L}-\frac{1}{5}\right)^{1/4}$},
    which is then plugged into $\frac{\partial}{\partial \lambda}\L=0$, where we use the fact that $\sum_{j=1}^\ell \kappa_j =R$, to yield 
    $\lambda = 6L$. We then substitute $\lambda$ to yield $x_j=\kappa_j$, which concludes the proof. 
\end{proof}

We arrive at the main theoretical contribution of the paper, which states that $L$ can be tuned to obtain a solution where the fraction of unfulfilled rebalancing requests $\delta>0$ is as small as desired. Notice that for a given solution $\x$, the expression  $\frac{\|\x_{\barE}-\k\|_1}{2R}$ represents the fraction of unfulfilled requests. 
\begin{theorem}[(Bounded fraction of unfulfilled requests)] \label{thm:amod}
    Let $\x^*\eqq\argmin_{\x} F_{E'}(\x)$ subject to~(\ref{eq:tap:flow}), (\ref{eq:tap:positive}).
    For every $\delta\in (0,1]$ exists $L_\delta\in (0,\infty)$ such that if $L>L_\delta$ then  $\frac{\|\x^*_{\barE}-\k\|_1}{2R}\leq \delta$.
\end{theorem}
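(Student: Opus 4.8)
The plan is to exploit that the free-flow time $L$ enters the dummy-edge cost as a pure multiplicative factor, so that large $L$ makes any deviation of $\x^*_{\barE}$ from the complete-rebalancing assignment $\k$ prohibitively expensive, whereas the road-network savings that could justify such a deviation stay bounded independently of $L$. As a preliminary structural fact I would note that the extended demand $OD'$ routes a total flow of exactly $R$ into the dummy vertex $n$; since $n$ has only incoming edges (those of $\barE$), conservation~\eqref{eq:tap:flow} forces $\sum_{(i,n)\in\barE}x_{in}=R$ for every feasible $\x$. Hence both $\x^*_{\barE}$ and $\k$ lie in the compact simplex $\Delta\eqq\{\y\geq\0:\sum_j y_j=R\}$, on which $\|\y-\k\|_1\leq 2R$.

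Next I would fix a reference solution $\x^\circ$ realizing complete rebalancing, $\x^\circ_{\barE}=\k$; this is feasible because sending exactly $\kappa_{in}=-r_i$ rebalancers to each shortage vertex satisfies~\eqref{eq:amod:flow} and can be routed through $G$. Its road cost $C_0\eqq F_E(\x^\circ)$ is a constant independent of $L$, since the cost functions on $E$ do not involve $L$. Because $\x^\circ_{\barE}=\k$ minimizes $F_{\barE}$ on $\Delta$ by Lemma~\ref{lem:opt}, optimality of $\x^*$ yields $F_{E'}(\x^*)\leq F_{E'}(\x^\circ)$, which rearranges (using $F_E(\x^*)\geq 0$) to
\[ F_{\barE}(\x^*)-F_{\barE}(\k)\;\leq\;F_E(\x^\circ)-F_E(\x^*)\;\leq\;C_0. \]

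I would then factor out $L$: writing $c_{in}(x)=L\bigl(1+\alpha(x/\kappa_{in})^4\bigr)$ gives $F_{\barE}(\x)=L\,G(\x_{\barE})$ for the $L$-independent function $G(\y)\eqq\sum_j y_j\bigl(1+\alpha(y_j/\kappa_j)^4\bigr)$, so the bound above becomes $G(\x^*_{\barE})-G(\k)\leq C_0/L$. To finish, define the gap function $\gamma(\eta)\eqq\min\{G(\y)-G(\k):\y\in\Delta,\ \|\y-\k\|_1\geq\eta\}$. Each summand of $G$ has a strictly increasing derivative, so $G$ is strictly convex and, by Lemma~\ref{lem:opt}, attains its unique minimum on $\Delta$ at $\k$; since the constraint set in the definition of $\gamma$ is compact and excludes $\k$, the minimum is attained and $\gamma(\eta)>0$ for every $\eta\in(0,2R]$. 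Setting $L_\delta\eqq C_0/\gamma(2R\delta)$, any $L>L_\delta$ with $\|\x^*_{\barE}-\k\|_1\geq 2R\delta$ would give $G(\x^*_{\barE})-G(\k)\geq\gamma(2R\delta)>C_0/L$, contradicting the displayed bound; therefore $\|\x^*_{\barE}-\k\|_1/(2R)<\delta$.

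The crux is this last step, namely turning the vanishing excess cost $G(\x^*_{\barE})-G(\k)\leq C_0/L\to 0$ into closeness of the argument $\x^*_{\barE}$ to $\k$. The compactness-plus-strict-convexity gap function $\gamma$ handles it cleanly and suffices for the purely existential claim about $L_\delta$. A quantitative route would instead extract an explicit modulus of strong convexity of $G$ on $\Delta$ and invert it, but the \bpr curvature degenerates as $y_j\to 0$, so that path is more delicate; the nonconstructive argument is preferable here.
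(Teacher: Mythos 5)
Your proof is correct, and it follows the paper's skeleton up to the decisive step: both arguments compare the optimum of $F_{E'}$ against a complete-rebalancing reference ($\x^\circ$ in your notation, $\x^0$ in the paper's), and both over-estimate the road-network savings available to a rebalancing-deficient solution by the constant $F_E(\x^0)$---the paper explicitly calls this its ``crude (over-)estimation.'' Where you genuinely diverge is in how the dummy-edge penalty for deviating from $\k$ is shown to dominate that constant. The paper proceeds quantitatively: setting $\Delta_{in}=x_{in}-\kappa_{in}$, it applies Bernoulli's inequality to the \bpr term and then Cauchy--Schwarz to obtain $F_{\barE}(\x^\delta)-F_{\barE}(\x^0)\geq 2.4\,R\,L\,\ell^{-1}\delta^2$ (with $\ell=|\barE|$) for every feasible $\x^\delta$ with $\|\x^\delta_{\barE}-\k\|_1>2R\delta$, which yields the explicit threshold $L_\delta=\ell\,F_E(\x^0)/(2.4\,R\,\delta^2)$. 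You instead factor out $L$ (writing $F_{\barE}=L\,G$ with $G$ independent of $L$) and obtain positivity of the gap $\gamma(2R\delta)$ nonconstructively from compactness of the simplex and strict convexity of $G$. Each route buys something: the paper's algebra exposes the $1/\delta^{2}$ scaling of $L_\delta$ and gives a computable (if conservative) estimate, which the paper leans on later when discussing how to tune $L$ in the experiments; your argument is lighter on algebra and more general---it works verbatim for any dummy-edge cost family of the form $L\cdot g(\cdot)$ with $g$ strictly convex, e.g.\ the modified Davidson cost the paper mentions as an alternative---but it is purely existential and says nothing about how $L_\delta$ grows as $\delta\to 0$ or with the instance size. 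Two small repairs to your write-up: the set $\left\{\y\in\Delta : \|\y-\k\|_1\geq\eta\right\}$ in the definition of $\gamma$ can be empty when $\eta$ is close to $2R$ (the maximum of $\|\y-\k\|_1$ over the simplex is $2\left(R-\min_j\kappa_j\right)$, which is strictly less than $2R$), so you should adopt the convention $\min\emptyset=+\infty$, under which your conclusion holds vacuously in that regime; and feasibility of the reference $\x^\circ$ silently assumes the road graph contains paths from every excess vertex to every shortage vertex---an assumption the paper also makes implicitly when it posits the existence of $\x^0$.
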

 
\begin{proof}
Let $\x^0$ be an assignment satisfying~(\ref{eq:tap:flow}), (\ref{eq:tap:positive}) such that  $\x^0_{\barE}=\k$, which minimizes the expression $F_{E'}(\x^0)$. That is, $\x^0$ minimizes $F_{E'}$ over all $\x$ which fully satisfies the rebalancing constraints. (Observe that such $\x^0$ represents the optimal solution of the original \amod problem.)
If $\x^0$ turns out to yield the minimum of $F_{E'}(\x)$ without conditioning on $\x^0_{\barE}=\k$ then the result follows. Thus, we assume otherwise. 
Fix $\delta\in (0,1]$ and let $\x^\delta$ represent any assignment, satisfying~(\ref{eq:tap:flow}), (\ref{eq:tap:positive}), and for which it holds that $\frac{\|\x^\delta_{\barE}-\k\|_1}{2R}>\delta$.

Our goal is to find a value of $L_\delta$ such that for any $L>L_\delta$ it holds that 
$F_{E'}(\x^\delta)>F_{E'}(\x^0)$. This implies that using such $L$ we are guaranteed that if a solution returned by \frankwolfe will have at most $\delta$ unfulfilled request. This is equivalent to requiring that \[F_{\barE}(\x^\delta)-F_{\barE}(\x^0)>F_E(\x^0)-F_E(\x^\delta).\]
Notice that by Lemma~\ref{lem:opt}, it holds that  $F_{\barE}(\x^\delta)-F_{\barE}(\x^0)>0$.

Recovering a precise upper bound for the expression $F_E(\x^0)-F_E(\x^\delta)$, which depends on $\delta$, is quite difficult. We therefore resort to a crude (over-)estimation of it, which is the value $F_E(\x^0)$. Thus, we wish to find $L$ such that $F_{\barE}(\x^\delta)-F_{\barE}(\x^0)>F_E(\x^0)$.

Define $\Delta_{in}:=x_{in}-\kappa_{in}$, for every $(i,n)\in \barE$, where \mbox{$x_{in}\in \x^\delta_{\barE}$}. For every such $x_{in}$ the following holds:
{\small
\begin{align*}\allowdisplaybreaks
    c_{in}(& x_{in}) = L\left(1+0.15 \left(\frac{x_{in}}{\kappa_{in}}\right)^{4}\right) \\ & \geq L \left(1+0.15 \left(1+\frac{4\Delta_{in}}{\kappa_{in}}\right)\right) = L  \left(1.15+\frac{0.6\Delta_{in}}{\kappa_{in}}\right),
\end{align*}
}where the inequality follows from Bernoulli's inequality. Also, note that $c_{in}(\kappa_{in})=L\cdot 1.15$. Thus,
    {\small
    \begin{align*}\allowdisplaybreaks
        F&_{\barE}(\x^\delta)=\sum_{(i,n)}x_{in} c_{in}(x_{in}) \geq \sum_{(i,n)}(\kappa_{in}+\Delta_{in}) L \left(1.15+\frac{0.6\Delta_{in}}{\kappa_{in}}\right) \\ & = \sum_{(i,n)}L\left(1.15\cdot \kappa_{in} + 1.75\cdot \Delta_{in}+\frac{0.6\Delta_{in}^2}{\kappa_{in}}\right) \\ & = \sum_{(i,n)}1.15 L \kappa_{in} +\sum_{(i,n)}1.75L\Delta_{in}+ \sum_{(i,n)}L\frac{0.6\Delta_{in}^2}{\kappa_{in}}\\ & = F_{\barE}(\x^0)  +0+ \sum_{(i,n)}L\frac{0.6\Delta_{in}^2}{\kappa_{in}} \geq  F_{\barE}(\x^0)+\frac{0.6L}{R}\sum_{(i,n)}\Delta^2_{in}\\
        &\geq F_{\barE}(\x^0)+\frac{0.6L}{R} \cdot \ell^{-1}\left(\sum_{(i,n)}|\Delta_{in}|\right)^2 \\ & = F_{\barE}(\x^0)+\frac{0.6L}{R\ell}\left\|\x^\delta_{\bar{E}}-\x^0_{\bar{E}}\right\|_1^2  \geq F_{\barE}(\x^0) +2.4RL\ell^{-1} \delta^2,
    \end{align*}}where the second to last inequality is due to Cauchy-Schwarz, and $\ell$ is the number of dummy edges. 
    
    Thus, we have just shown that for any $\x^\delta$ it holds that $F_{\barE}(\x^\delta)-F_{\barE}(\x^0)\geq 2.4RL\ell^{-1}\delta^2$. To conclude, for \mbox{$L=\frac{F_E(\x^0)}{2.4R\ell^{-1}\delta^2}$}, it follows that $F_{E'}(\z^\delta)>F_{E'}(\z^0)$, which implies that any value $\x^*$ satisfying the constraints~(\ref{eq:tap:flow}), (\ref{eq:tap:positive}) and minimizing $F_{E'}(\x)$, also guarantees that $\frac{\|\x^*_{\barE}-\k\|_1}{2R}\leq \delta$.
\end{proof}

We will consider the practical aspects of computing a proper $L$ in Section~\ref{sec:experiments}. The next corollary is the final piece of the puzzle. It proves that when using a proper $L$, not only that $\delta$ is bounded, but also the value $F_E(\x^*)$ is at most $F_E(\x^0)$. 

\begin{corollary}[(Bounded cost of routing)]\label{thm:amod}
    Fix $\delta\in (0,1]$ and let $L\in (0,\infty)$ such that $L>L_{\delta}$.
    Then (i) $\frac{\|\x^*_{\barE}-\k\|_1}{2R}\leq \delta$, and (ii) $F_{E}(\x^*)< F_E(\x^0)$,  
    where $\x^*=\argmin_{\x} F_{E'}(\x)$ under constraints~(\ref{eq:tap:flow}), (\ref{eq:tap:positive}).
\end{corollary}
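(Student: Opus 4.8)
The plan is to settle part (i) by direct appeal to the preceding theorem, and to derive part (ii) from a one-line optimality comparison between $\x^*$ and $\x^0$ combined with the additive decomposition $F_{E'}=F_E+F_{\barE}$ and the uniqueness asserted in Lemma~\ref{lem:opt}.

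Part (i) requires no new work: since $L>L_\delta$, the bound $\frac{\|\x^*_{\barE}-\k\|_1}{2R}\le\delta$ is precisely the conclusion of the preceding theorem applied to $\x^*=\argmin_\x F_{E'}(\x)$.

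For part (ii) I would first record the governing inequality. Because $\x^*$ minimizes $F_{E'}$ over all assignments feasible for~(\ref{eq:tap:flow}), (\ref{eq:tap:positive}), while $\x^0$ is merely one such feasible assignment, we have $F_{E'}(\x^*)\le F_{E'}(\x^0)$. I would then argue that every feasible assignment sends exactly $R$ units of flow through the dummy edges: by construction $OD'$ contains, for each $i$ with $r_i>0$, a request of intensity $r_i$ whose destination is the sink $n$, so the total demand into $n$ equals $R$; since $n$ has no outgoing edges, the flow-conservation constraint~(\ref{eq:tap:flow}) at $n$ forces $\sum_{(i,n)\in\barE}x_{in}=R$ for every feasible $\x$. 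Hence Lemma~\ref{lem:opt} applies to both $\x^*$ and $\x^0$ and identifies $\k$ as the \emph{unique} minimizer of $F_{\barE}$ under this constraint, giving $F_{\barE}(\x^*)\ge F_{\barE}(\x^0)=F_{\barE}(\k)$. Feeding these two facts into the split $F_{E'}=F_E+F_{\barE}$ yields
\[ F_E(\x^*)=F_{E'}(\x^*)-F_{\barE}(\x^*)\le F_{E'}(\x^0)-F_{\barE}(\x^*)\le F_{E'}(\x^0)-F_{\barE}(\x^0)=F_E(\x^0). \]

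The main obstacle is upgrading this weak chain to the strict inequality claimed in the statement. The chain is tight exactly when $F_{\barE}(\x^*)=F_{\barE}(\x^0)$, i.e.\ (by the uniqueness in Lemma~\ref{lem:opt}) when $\x^*_{\barE}=\k$, which is the degenerate situation in which $\x^*$ already rebalances fully and hence ties the optimal AMoD solution $\x^0$ in cost. I would rule this out by reusing the nondegeneracy reduction from the preceding theorem's proof: if $\x^0$ is itself a global minimizer of $F_{E'}$ the assertion is trivial, so assume otherwise; then $\x^*_{\barE}\neq\k$, since $\x^*_{\barE}=\k$ would make $\x^*$ a fully rebalancing global minimizer and thereby render $\x^0$ one as well, a contradiction. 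With $\x^*_{\barE}\neq\k$ the uniqueness clause of Lemma~\ref{lem:opt} makes $F_{\barE}(\x^*)>F_{\barE}(\x^0)$ strict, and this strictness propagates through the displayed chain to give $F_E(\x^*)<F_E(\x^0)$, completing the proof.
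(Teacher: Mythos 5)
Your proof is correct and follows essentially the same route as the paper's: part (i) by direct appeal to the theorem, and part (ii) via the decomposition $F_{E'}=F_E+F_{\barE}$, the global optimality of $\x^*$, and Lemma~\ref{lem:opt} applied to the dummy-edge flows, chained into the same two-step inequality. Your write-up is in fact somewhat more careful than the paper's own two-line proof, which asserts the strict inequalities $F_{E'}(\x^*)<F_{E'}(\x^0)$ and $F_{\barE}(\x^0)<F_{\barE}(\x^*)$ without spelling out the non-degeneracy reduction or the fact that every feasible flow routes exactly $R$ units into the dummy edges (needed to invoke Lemma~\ref{lem:opt}), both of which you justify explicitly.
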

\begin{proof}
    Let $\x^*,\x^0$ be as defined in the previous proof. It is clear that $x^*$ satisfies (i). Now, observe that by definition $F_{E'}(\x^*)< F_{E'}(\x^0)$, and by Lemma~\ref{lem:opt}, $F_{\bar{E}}(\x^0)<F_{\bar{E}}(\x^*)$. Then the following derivation proves (ii):
    \begin{equation*}
        F_{E}(\x^*)+F_{\bar{E}}(\x^*) < F_{E}(\x^0)+F_{\bar{E}}(\x^0) < F_{E}(\x^0)+F_{\bar{E}}(\x^*). \qedhere
    \end{equation*}
\end{proof}
\section{Experimental results}\label{sec:experiments}
In this section, we use experimental results to demonstrate the power of our approach for \amod routing via a reduction to \tap (Section~\ref{sec:alg_amod}) on a real-world case study. In the first set of experiments in Section~\ref{sec:results} we validate experimentally the theory developed in Section~\ref{sec:theory}. In summary, we observe that the approach yields near-optimal solutions within a few seconds, where most (more than 99\%) of the rebalancing requests are fulfilled, when $L$ is properly tuned. We then test the scalability of the approach on scenarios involving as much as 600k user requests, where we observe that running times and convergence rates scale only linearly with the size of input. In the final set of experiments we demonstrate the benefit of the approach over previous methods. 

\subsection{Implementation Details}
All results were obtained using a commodity laptop equipped with 2.80GHz $\times$ 4 core i7-7600U CPU, and 16GB of RAM, running 64bit Ubuntu 18.04 OS. The \cpp implementation of the \frankwolfe algorithm is adapted (with merely minor changes) from the  \texttt{routing-framework}, which was developed for~\cite{BuchETAL18}. For shortest-path computation in the \allornothing routine, we use \emph{contraction hierarchies}~\cite{GeisbergerETAL12}, which are embedded in the \texttt{routing-framework}, and are in turn based upon the code in the \texttt{RoutingKit} (see~\cite{DibbeltETAL16}). Running times reported below are for a 4-core parallelization. 

In our experiment we observed that in some situations, the first few iterations of \frankwolfe route most of the rebalancers to a select set of dummy edges, so that the actual flow is far larger than the capacity. This leads to numeric overflows when working with the standard \cpp \texttt{double} and to failure of the program. We mention that this phenomenon was not observed for the modified Davidson cost function~\cite{Akcelik78}, linearized at $95\%$ of the capacity, which has a more gentle gradient. To alleviate the problem when working with \bpr, one can resort to using \texttt{long double} or linearize the value of the function after a certain threshold is reached. We chose the latter, by linearizing at $500\%$ of the capacity. 
We emphasize that this does not affect the final outcome of the algorithm.

Finally, we mention that we experimented with a few cost functions (including linear, and modified Davidson) for the dummy edges, until we settled on \bpr, which yields the best convergence rates. 

\subsection{Data}
Similarly to~\cite{SalazarTsaoEtAl2019}, our experiments are conducted over Manhattan in New York City, where the OD-pairs are inferred from taxi rides that occurred during the morning peak hour on March 1st, 2012. As in~\cite{SalazarTsaoEtAl2019}, we assume to centrally control all  ride-hailing vehicles in Manhattan, and accordingly scale taxi rides requests by a factor of $6$.
The total number of real user OD-pairs in our experiments is thus $6\times \num{25960}$ (unless stated otherwise). 
In order to take into consideration the fact that autonomous vehicles need to share the road with private vehicles, which should increase the overall cost of travelling along edges, we introduce a parameter of exogenous traffic (as was done in~\cite{SalazarTsaoEtAl2019}). In particular, for a non-dummy edge $(i,j)\in E$, with a flow $x_{ij}$, we assign the cost $c_{ij}\left(x_{ij}+x^{e}_{ij}\right)$, where $x^{e}_{ij}$ denotes the exogenous flow. For simplicity, we set this value so that the fraction of $x^{e}_{ij}$ over the capacity $\kappa_{ij}$ of the edge $\kappa_{ij}$ is the same, over all edges. That is, we choose a value $\gamma_{\text{exo}}\geq 0$ and set $x^e_{ij}/\kappa_{ij}=\gamma_{\text{exo}}$.  Unless stated otherwise, $\gamma_{\text{exo}}=0.8$, which approximates the scenario of the rush-hour traffic, mentioned above. The underlying road-map $G=(V,E)$ was extracted from an Open Street Map~\cite{HakWeb08}, where $|V|=\num{1352},|E|=\num{3338}$.

\subsection{Results}\label{sec:results}
Before proceeding to the experiments we mention that in some results we compare the outcome of our algorithm with an optimal value, denoted by $\text{OPT}$. To obtain it, we run the algorithm, with a corresponding set of parameters, for $\num{10000}$ iterations. This provides a good approximation of the real optimum. E.g., for the final iteration of the algorithm, when $L=96$ minutes, the relative difference in the real and dummy cost is $\num{1.64}\cdot 10^{-7}$ and $\num{2.91}\cdot 10^{-7}$, respectively.  The terms real and dummy costs correspond to $F_E$ and $F_{\bar{E}}$, respectively. 
\vspace{5pt}

\noindent \textbf{Validation of the theory.} Our first set of experiments is designed to validate the convergence of the approach, and the theory presented in~Section~\ref{sec:alg_amod}. In summary, we observe that with already small $L$ a solution where a large majority of rebalancing requests are fulfilled is achieved. Moreover, the system cost for the rebalaced system is also very close to the optimal value. Importantly, this is achieved within only $\num{100}$ iterations of \frankwolfe, corresponding to around $15$ seconds.

In order to test how the value of $L$ affects the fraction of unbalanced requests $\delta$, as stated in Theorem~\ref{thm:amod}, and the  real and dummy costs of  solution, i.e., $C_r\eqq F_E(\x),C_d\eqq F_{\bar{E}}(\x)$, respectively, we experiment with values of $L$ in the range of $\num{3}$ to $\num{192}$ minutes (see Figure~\ref{fig:basic}).

In terms of the fraction of unfulfilled requests $\delta$, as Theorem~\ref{thm:amod} states, increasing $L$ reduces this value. For instance, when $L=3$, $\delta=0.109$, but already when $L=48$ then $\delta=0.009$, after 100 iterations. It should be noted that a small value of $\delta$ is reached only when $C_d$ is noticeably larger than $C_r$ (see middle and bottom plots in Figure~\ref{fig:basic}, for comparison).

This implies that our estimation for $L$ suggested in Theorem~\ref{thm:amod} is quite conservative. From a practical point of view, we recommend iterating over $L$ using binary search until a desired value of $\delta$ is achieved.

We now discuss the relation between the magnitude of $L$ and $C_r$. We observe that $C_r$ increases with $L$. This follows from the fact that a smaller $\delta$ requires rebalancers to increase the total length of their trips in order to accommodate more requests. Here we can also observe a possible drawback in setting $L$ to be needlessly large, as it takes more iterations to settle on the correct value of $C_r$. This follows from the fact that to obtain a smaller $\delta$ requires more iterations. 

Lastly, observe that already after approximately 50 iterations, all three values reach a plateau, i.e., increasing the number of iterations only slightly changes the corresponding value. Also note that values of OPT, computed for $L=96$, are very close to  corresponding values for the same $L$ after only $100$ iterations. 
This indicates that a small number of iterations suffices to reach a near-optimal value, be it $\delta$, $C_r$, or $C_d$. 

\begin{figure}
    \centering
    \includegraphics[width=0.9\columnwidth,trim={0.5cm 1.5cm 1.5cm 2.5cm},clip]{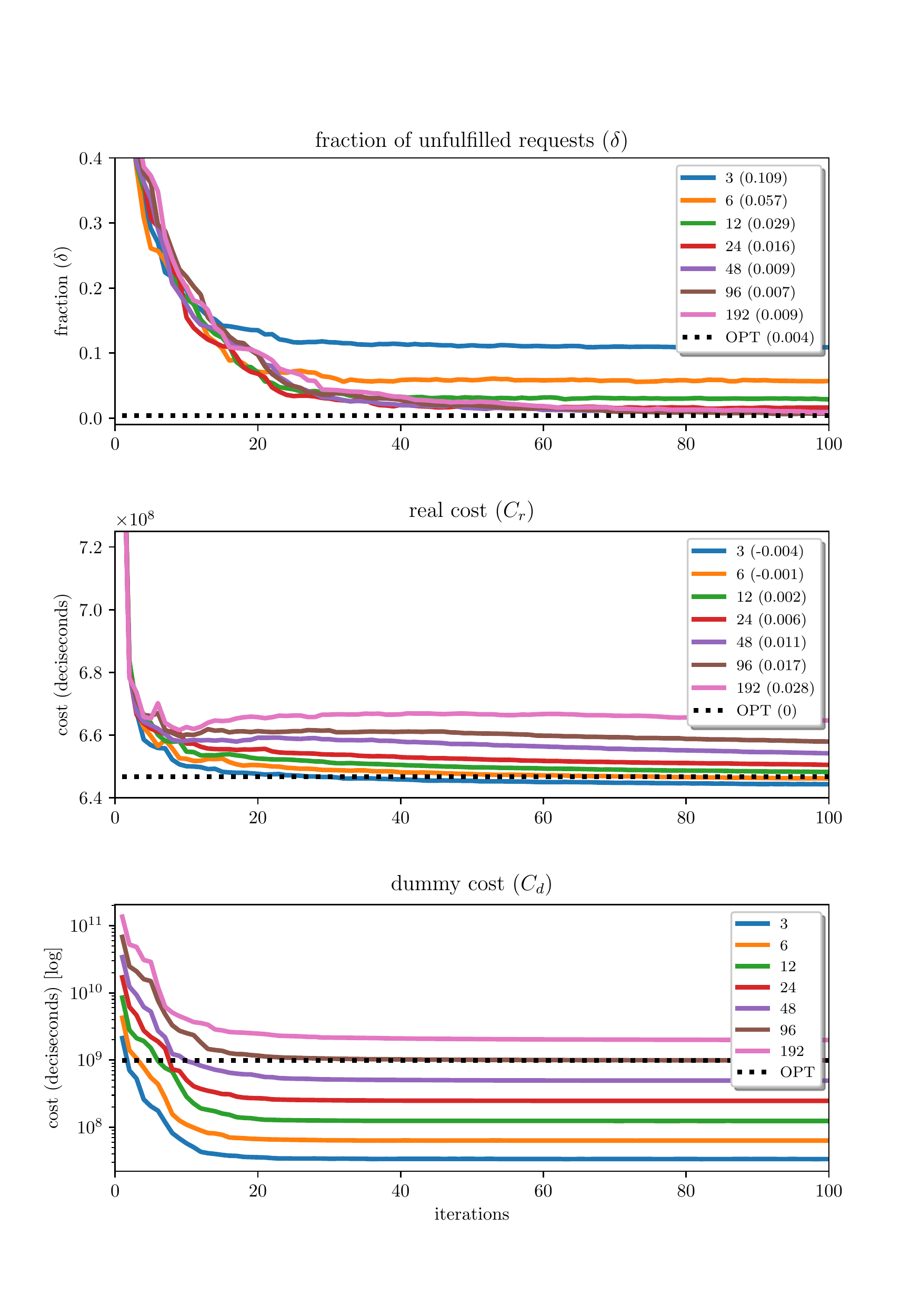}\vspace{-5pt}
    \caption{Validation of theoretical results. For all the plots, OPT represents the corresponding optimal value for $L=96$. [TOP] A plot of $\delta$. The left number in the legend represents $L$, whereas the value in the brackets denotes $\delta$ after iteration $100$. For instance, OPT yields $\delta=0.004$, whereas for the same $L$ after $100$ iterations we have that $\delta=0.007$.
    [CENTER] A plot of $C_r$. The left number in the legend represents $L$, whereas the value in brackets denotes $(C_r-\text{OPT})/\text{OPT}$ after iteration $100$. E.g., $C_r$ for $L=96$ is only $\num{1.7}\%$ longer than OPT. 
    [BOTTOM] A plot for $C_d$. }
	\label{fig:basic}
\vspace{-15pt}
\end{figure}
\vspace{5pt}

\noindent \textbf{Scalability.} In this set of experiments we demonstrate the scalability of the approach by increasing the total number of OD requests. We use the set of $6\times\num{25960}$ OD pairs, which was utilized in the above experiments, as a basis, and then multiply this set by $1,2,3$ and $4$. The last case includes $\num{623040}$ travel requests. In an attempt to make the four settings similar in terms of the total traffic on the road, we pair each setting with exogenous traffic of $\gamma_\text{exo}\in \{0.8,0.6,0.4,0.2\}$, respectively. 

Each scenario was executed for $1000$ iterations. For the first two plots we chose to show results only for the first $100$ iterations, as the change is very minor from this point on. We also omit a plot for the dummy cost as we found it uninformative. The results are presented in Figure~\ref{fig:scale}.

The rate of convergence and the final result for $\delta,C_r$ behave similarly for the four settings. However, we note that we expect to see a more significant difference for a more realistic data set, as a bigger data set would have more diverse OD pairs. Nevertheless, we emphasize that our four settings do not yield similar solutions with respect to flow distribution, as the scale of flow affects the cost. In terms of the running time, we mention that it scales proportionally to the number of OD pairs (e.g., x$1$ is roughly 4 times quicker than x$4$), and the rate of change with respect to the number of iterations is roughly constant. 

\begin{figure}
    \centering
    \includegraphics[width=0.9\columnwidth,trim={0.5cm 1.5cm 1.5cm 2.5cm},clip]{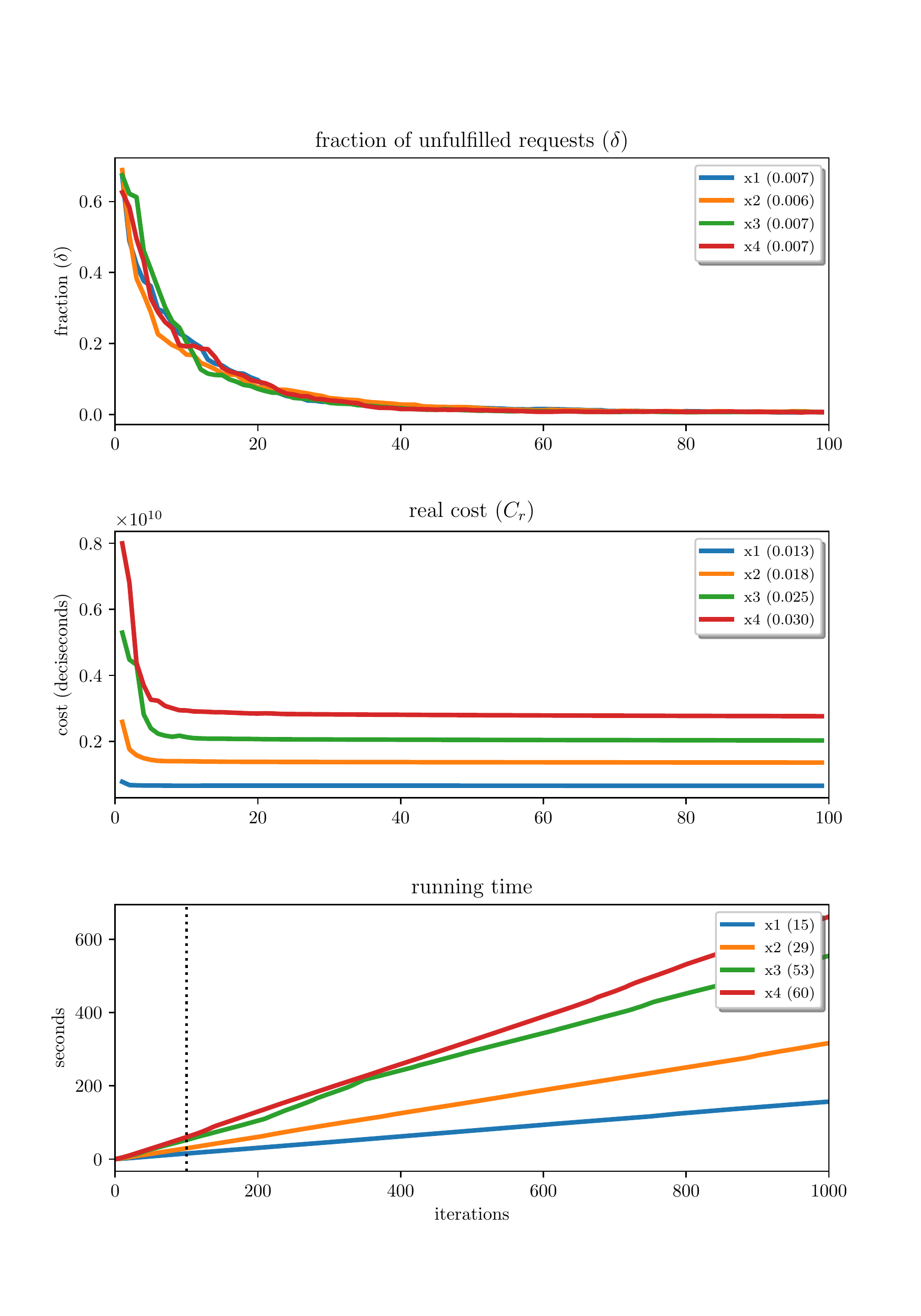}\vspace{-5pt}
    \caption{Plots for scalability experiments.  [TOP] A plot of $\delta$, i.e., fraction of unfulfilled rebalancing requests. The left notation in the legend ($i$x) indicates the used number of copies of the original OD set, whereas the value in brackets denotes $\delta$ after iteration $100$. 
    [CENTER] A plot of $C_r$. The value in brackets denotes the ratio of deviation 
    for $C_r$ between the iterations $100$ and $1000$. 
    [BOTTOM] A plot of total running time. The value in brackets denotes the running time after iteration $100$.}
	\label{fig:scale}
\vspace{-15pt}
\end{figure}\vspace{5pt}

\noindent \textbf{Comparison with previous work.} Here we demonstrate the benefits of computing a solution to \amod using the precise formulation of the cost function \bpr, as opposed to a piecewise-affine approximation of \bpr, or a congestion-unaware solution. The piecewise-affine approximation approach was suggested in~\cite{SalazarTsaoEtAl2019}, where the \bpr function is approximated by two affine functions (see more details in Section~\ref{sec:related}). In terms of computation time, our approach yields the results in around 15 seconds, whereas the reported running times of~\cite{SalazarTsaoEtAl2019} are below 4 minutes for similar hardware. We wish to point out that~\cite{SalazarTsaoEtAl2019} do not minimize travel time for rebalancing vehicles, and also include walking times in their analysis.  The congestion-unaware approach, which was utilized in earlier works (see, e.g., \cite{PavoneSmithEtAl2011}), generates a solution without considering the effect of congestion of the routed vehicles on the overall cost.

We implemented all three types of solution using our framework, by replacing the precise formulation of the \bpr function, where relevant. We wish to clarify that the cost function used on the dummy edges remains \bpr, to guarantee rebalancing (see Section~\ref{sec:theory}), as this does not affect the real solution cost. The computation was done with $L=96$. The same applies to OPT which was computed using the full \bpr.

We ran the three approaches for varying values of \mbox{$\gamma_{\text{exo}}\in [0,2]$} (see plots of the comparison in Figure~\ref{fig:exo}). As was observed in previous studies, congestion-unaware planning underestimates the real travel cost, which results in plans that divert traffic to overly congested routes. The deviation from OPT increases with exogenous traffic. Already for $\gamma_{\text{exo}}=0.8$, the total cost is around $1.3$ times OPT. Approximate \bpr is much more accurate in this respect. However, it either under- or overestimates the real cost, which yields plans where vehicles are rerouted from low-cost routes to more congested routes.  Approximate \bpr yields plans whose deviation from OPT is twice as high for the precise \bpr, when $\gamma_{\text{exo}}\in [0, 1.1]$; it coincides with our solution for $\gamma_{\text{exo}}=1.2$; for larger values of $\gamma_{\text{exo}}\in [0, 1.1]$ the deviation becomes more noticeable. For instance, when  $\gamma_{\text{exo}}=1.3$ it yields a solution of around~$1.1$ times OPT. In contrast, our approach yields an accurate estimation of OPT for the entire range of~$\gamma_{\text{exo}}$. 
\begin{figure}
    \centering
    \includegraphics[width=0.9\columnwidth,trim={0.5cm 16cm 1.4cm 2.5cm},clip]{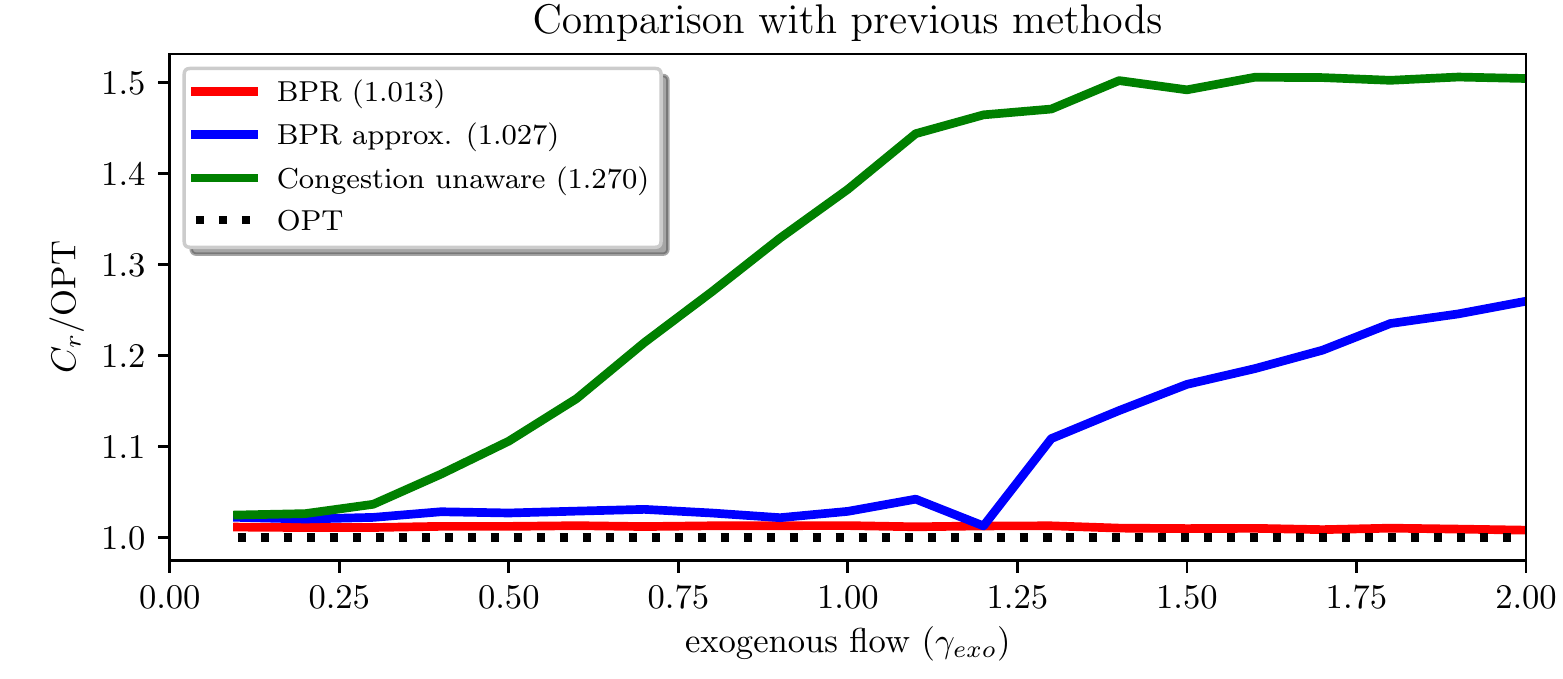}\vspace{-5pt}
    \caption{Plots for comparison experiments. For each of the three types of cost functions we plot the ratio between the obtained cost $C_r$ and the cost for OPT, as a function $\gamma_{\text{exo}}$ ($x$-axis). The corresponding value for $\gamma_{\text{exo}}=0.8$ is given in the brackets.}
	\label{fig:exo}
\vspace{-15pt}
\end{figure}
\section{Discussion and Future Work}\label{sec:future}
This paper presented a computationally-efficient framework to compute the system-optimal operation of a fleet of self-driving cars providing on-demand mobility in a congestion-aware fashion.
To the best of our knowledge, this is the first scheme providing high-quality routing solutions for large-scale \gls{abk:amod} fleets within seconds, thus enabling real-time implementations for operational control through receding horizon optimization (to account for new information that is revealed over time) and randomized sampling (to recover  integer flows with near-optimality guarantees \cite{Rossi2018}).
Our approach consists of reducing our problem to a \tap instance, such that an optimal solution achieved for the latter yields an optimal for \amod. This allows us to leverage modern and highly effective approaches for \tap.
We showcased the benefits of our approach in a real-world case-study. The results showed our method outperforming state-of-the-art approaches by at least one order of magnitude in terms of computational time, whilst improving the system performance by up to 20\%.

This work opens the field for several extensions, and leaves a few open questions.
On the side of theory, our immediate goal is to analyze the convergence rate of our approach. There is an abundance of recent results on the theoretical properties of \frankwolfe (see, e.g.,~\cite{LacosteJaggi15,PenaRodriguez16}), which regained popularity in recent years due to applications in machine learning\footnote{See \url{https://sites.google.com/site/frankwolfegreedytutorial/}.}. However, it is not clear whether these results are applicable to our setting, and what their implications are on the convergence of the real cost $F_E$, rather than $F_{E'}$. We also plan to investigate approaches to obtain a more informative estimation of the constant $L$ (see Theorem~\ref{thm:amod}).
On the implementation side, we mention that the performance can be further improved by using customizable contraction hierarchies~\cite{DibbeltETAL16} for pathfinding, and bundling identical OD pairs. We also point out the fact that the \allornothing routine is embarrassingly parallel, and a significant speedup can be gained from using a multi-core machine.
Finally, on the application side, we aim at exploiting the high computational efficiency of the presented approach by implementing it in real-time using receding horizon schemes.
To this end, it might be necessary to employ a time-expansion of the road-graph and account for stochastic effects, as it was done in~\cite{IglesiasRossiEtAl2016,TsaoIglesiasEtAl2018}.
In addition, it is of interest to extend this framework to capture the interaction with public transit~\cite{SalazarRossiEtAl2018} and the power grid~\cite{RossiIglesiasEtAl2018}, and account for the interaction of self-driving vehicles with the urban infrastructure.


\ifdoubleblind
\else
\section*{Acknowledgments}
We would like to thank Valentin Buchhold for advising on the {\tt routing-framework}. 
We thank Dr.\ Ilse New and Michal Kleinbort for proofreading this paper and providing us with their comments and advice. Our special thanks are also extended to our labmates Ramón Darío Iglesias, Matthew Tsao, and Jannik Zgraggen for the fruitful discussions. 
The second author would like to express his gratitude to Dr.\ Chris Onder for his support.
This research was supported by the National Science Foundation under CAREER Award CMMI-1454737 and the Toyota Research Institute (TRI). The first author is also supported by the Fulbright Scholars Program. This article solely reflects the opinions and conclusions of its authors and not NSF, TRI, Fulbright, or any other entity.
\fi

\bibliographystyle{plainnat}
\bibliography{tex/bibliography,tex/ASL_papers,main}

\end{document}